\newtheorem{theorem}{Theorem}[section]
\newtheorem{corollary}[theorem] {Corollary}
\newtheorem{definition}[theorem]{Definition}
\newtheorem{problem}[theorem]{Problem}
\newtheorem{question}[theorem]{Question}
\title{This is the title}
\begin{document}
\hrule\hrule\hrule\hrule\hrule
\vspace{0.3cm}	
\begin{center}
{\bf{FUNCTIONAL DEUTSCH  UNCERTAINTY PRINCIPLE}}\\
\vspace{0.3cm}
\hrule\hrule\hrule\hrule\hrule
\vspace{0.3cm}
\textbf{K. MAHESH KRISHNA}\\
Post Doctoral Fellow \\
Statistics and Mathematics Unit\\
Indian Statistical Institute, Bangalore Centre\\
Karnataka 560 059, India\\
Email: kmaheshak@gmail.com\\

Date: \today
\end{center}

\hrule\hrule
\vspace{0.5cm}
\textbf{Abstract}: Let $\{f_j\}_{j=1}^n$ and $\{g_k\}_{k=1}^m$ be  Parseval p-frames  for a finite dimensional Banach space $\mathcal{X}$. Then   we show that 
\begin{align}\label{UE}
 \log (nm)\geq S_f (x)+S_g (x)\geq -p \log \left(\displaystyle\sup_{y \in \mathcal{X}_f\cap \mathcal{X}_g, \|y\|=1}\left(\max_{1\leq j\leq n, 1\leq k\leq m}|f_j(y)g_k(y)|\right)\right), \quad \forall x \in \mathcal{X}_f\cap \mathcal{X}_g,
\end{align}
where 
\begin{align*}
&\mathcal{X}_f\coloneqq \{z\in \mathcal{X}: f_j(z)\neq 0, 1\leq j \leq n\}, \quad  \mathcal{X}_g\coloneqq \{w\in \mathcal{X}: g_k(w)\neq 0, 1\leq k \leq m\},\\
&S_f (x)\coloneqq -\sum_{j=1}^{n}\left|f_j\left(\frac{x}{\|x\|}\right)\right|^p\log \left|f_j\left(\frac{x}{\|x\|}\right)\right|^p, \quad   S_g (x)\coloneqq -\sum_{k=1}^{m}\left|g_k\left(\frac{x}{\|x\|}\right)\right|^p\log \left|g_k\left(\frac{x}{\|x\|}\right)\right|^p, \quad  \forall x \in \mathcal{X}_g.
\end{align*}
 We call Inequality (\ref{UE}) as \textbf{Functional Deutsch Uncertainty Principle}. For Hilbert spaces, we show that Inequality (\ref{UE})  reduces to the uncertainty principle  obtained by Deutsch \textit{[Phys. Rev. Lett., 1983]}. We also derive a dual of Inequality  (\ref{UE}).

\textbf{Keywords}:   Uncertainty Principle, Orthonormal Basis, Parseval Frame, Hilbert space, Banach space.

\textbf{Mathematics Subject Classification (2020)}: 42C15.\\

\hrule

\tableofcontents
\hrule
\section{Introduction}

Let $d \in \mathbb{N}$ and  $~\widehat{}:\mathcal{L}^2 (\mathbb{R}^d) \to \mathcal{L}^2 (\mathbb{R}^d)$ be the unitary Fourier transform obtained by extending uniquely the bounded linear operator 
\begin{align*}
	\widehat{}:\mathcal{L}^1 (\mathbb{R}^d)\cap  \mathcal{L}^2	 (\mathbb{R}^d) \ni f \mapsto \widehat{f} \in  C_0(\mathbb{R}^d); \quad \widehat{f}: \mathbb{R}^d \ni \xi \mapsto \widehat{f}(\xi)\coloneqq \int\limits_{\mathbb{R}^d}	f(x)e^{-2\pi i  \langle x, \xi \rangle}\,dx\ \in \mathbb{C}.
\end{align*}
The  \textbf{Shannon entropy}  at a function  $f \in \mathcal{L}^2 (\mathbb{R}^d) \setminus \{0\}$ is defined as 

\begin{align*}
	S(f)\coloneqq  -\int\limits_{\mathbb{R}^d}	\left|\frac{f(x)}{\|f\|} \right|^2\log \left|\frac{f(x)}{\|f\|} \right|^2\,dx
\end{align*} 
 (with the convention $0\log0=0$) \cite{SHANNON}. In 1957, Hirschman proved the following result \cite{HIRSCHMAN}. 
\begin{theorem}\cite{HIRSCHMAN} (\textbf{Hirschman Inequality})
For all	$f \in \mathcal{L}^2 (\mathbb{R}^d) \setminus \{0\}$, 
\begin{align}\label{H}
	S(f)+S(\widehat{f})\geq 0.
\end{align}
\end{theorem}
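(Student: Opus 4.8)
The plan is to deduce the Hirschman inequality from the Hausdorff--Young inequality by an endpoint-differentiation argument at the exponent $p=2$. First I would normalize: since $S(f)$ and $S(\widehat{f})$ depend only on the normalized functions $f/\|f\|$ and $\widehat{f}/\|\widehat{f}\|$, and since $\|\widehat{f}\|_2=\|f\|_2$ by Plancherel's theorem, I may assume $\|f\|_2=1$, whence $\|\widehat{f}\|_2=1$ as well. The central input is the Hausdorff--Young inequality (with constant $1$ for this Fourier normalization): for $1\le p\le 2$ and the conjugate exponent $p'$ defined by $\frac1p+\frac{1}{p'}=1$, one has $\|\widehat{f}\|_{p'}\le\|f\|_p$.

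With this in hand, I would define $G:[1,2]\to\mathbb{R}$ by $G(p):=\|f\|_p-\|\widehat{f}\|_{p'}$. Hausdorff--Young gives $G(p)\ge 0$ on $[1,2]$, while Plancherel gives $G(2)=\|f\|_2-\|\widehat{f}\|_2=0$. Thus $G$ attains its minimum on $[1,2]$ at the right endpoint $p=2$, and the elementary difference-quotient estimate $\frac{G(p)-G(2)}{p-2}\le 0$ for $p<2$ forces the one-sided derivative to satisfy $G'(2^-)\le 0$. The whole inequality will then come out of computing $G'(2)$.

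It remains to carry out that computation and identify it with the entropies. Writing $I(p):=\int_{\mathbb{R}^d}|f|^p$ and differentiating $\log\|f\|_p=\frac1p\log I(p)$, one finds at $p=2$ (using $I(2)=1$)
\begin{align*}
\frac{d}{dp}\|f\|_p\Big|_{p=2}=\frac12\int_{\mathbb{R}^d}|f|^2\log|f|=\frac14\int_{\mathbb{R}^d}|f|^2\log|f|^2=-\frac14 S(f).
\end{align*}
The same computation applied to $\widehat{f}$ (again with $\|\widehat{f}\|_2=1$) gives $\frac{d}{dq}\|\widehat{f}\|_q\big|_{q=2}=-\frac14 S(\widehat{f})$, and since $p'=\frac{p}{p-1}$ satisfies $\frac{dp'}{dp}\big|_{p=2}=-1$, the chain rule yields $\frac{d}{dp}\|\widehat{f}\|_{p'}\big|_{p=2}=\frac14 S(\widehat{f})$. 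Combining, $G'(2)=-\frac14\big(S(f)+S(\widehat{f})\big)$, so the endpoint condition $G'(2^-)\le 0$ is exactly $S(f)+S(\widehat{f})\ge 0$.

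The main obstacle I anticipate is analytic rather than conceptual: justifying the differentiation of $p\mapsto\int_{\mathbb{R}^d}|f|^p$ under the integral sign at $p=2$ (dominated convergence for the difference quotients, splitting the domain into $\{|f|\le 1\}$ and $\{|f|>1\}$ to control $|f|^p\log|f|$), and ensuring that the integrals defining $S(f)$ and $S(\widehat{f})$ are meaningful in $[-\infty,+\infty]$ so that the inequality makes sense even when one entropy is $+\infty$. I would handle this by first proving the statement for a dense class of sufficiently regular $f$ for which all integrals converge absolutely and the differentiation is legitimate, and then extending to general $f$ through the lower-semicontinuity/monotone-limit structure of the entropy functional.
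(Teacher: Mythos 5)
The paper does not prove this theorem at all: it is quoted verbatim from Hirschman's 1957 paper with a citation, so there is no in-paper proof to compare against. Your proposal is, in fact, essentially Hirschman's original argument, and its core is correct: with the kernel $e^{-2\pi i\langle x,\xi\rangle}$ the Hausdorff--Young inequality $\|\widehat{f}\|_{p'}\le\|f\|_p$ holds with constant $1$ on $1\le p\le 2$, Plancherel pins $G(2)=0$, and the one-sided difference quotient at the interior-optimal endpoint $p=2$ forces $G'(2^-)\le 0$. Your derivative computations check out: with $\|f\|_2=1$ one gets $\frac{d}{dp}\|f\|_p\big|_{p=2}=-\tfrac14 S(f)$, the chain rule with $\frac{dp'}{dp}\big|_{p=2}=-1$ flips the sign on the dual term, and $G'(2)=-\tfrac14\bigl(S(f)+S(\widehat{f})\bigr)\le 0$ is exactly the claim. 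The one place you should expect real work is the final extension step, which you describe in a single sentence: entropy is neither continuous nor straightforwardly lower semicontinuous under $L^2$ limits (the integrand $|f|^2\log|f|^2$ changes sign), so passing from a dense regular class to arbitrary $f\in\mathcal{L}^2(\mathbb{R}^d)\setminus\{0\}$ requires a careful truncation/monotonicity argument, and for general $f$ the statement must be read with the convention that it holds whenever $S(f)+S(\widehat{f})$ is well-defined in $[-\infty,+\infty]$. That caveat is standard and is how the literature (including Hirschman) handles it, so your proposal is sound; just be aware that the limiting step is a genuine lemma, not bookkeeping.
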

In the same paper \cite{HIRSCHMAN} Hirschman conjectured that Inequality (\ref{H}) can be improved to 
\begin{align}\label{HC}
	S(f)+S(\widehat{f})\geq d(1-\log 2), \quad f \in \mathcal{L}^2 (\mathbb{R}^d) \setminus \{0\}.
\end{align}
Inequality (\ref{HC}) was proved independently in 1975  by Beckner  \cite{BECKNER} and Bialynicki-Birula and Mycielski \cite{BIALYNICKIBIRULA}.
\begin{theorem}\cite{BECKNER, BIALYNICKIBIRULA} (\textbf{Hirschman-Beckner-Bialynicki-Birula-Mycielski Uncertainty Principle})
	For all	$f \in \mathcal{L}^2 (\mathbb{R}^d) \setminus \{0\}$, 
	\begin{align*}
		S(f)+S(\widehat{f})\geq d(1-\log 2).
	\end{align*}
\end{theorem}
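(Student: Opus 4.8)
The plan is to derive the inequality from the \emph{sharp} form of the Hausdorff--Young inequality, the Babenko--Beckner inequality, by differentiating at the self-dual exponent $p=2$. Concretely, for $1\le p\le 2$ with conjugate exponent $p'$ given by $1/p+1/p'=1$, the Babenko--Beckner inequality asserts
$$\|\widehat{f}\|_{p'}\le A_p^{\,d}\,\|f\|_p, \qquad A_p=\left(\frac{p^{1/p}}{(p')^{1/p'}}\right)^{1/2},$$
with equality at $p=2$, where it reduces to the Plancherel identity $\|\widehat{f}\|_2=\|f\|_2$ and $A_2=1$. I would take this inequality as the single nontrivial analytic input.

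After normalizing $\|f\|_2=1$ (harmless, since both sides of the claimed inequality are invariant under $f\mapsto f/\|f\|$), I would set
$$F(p)\coloneqq d\log A_p+\log\|f\|_p-\log\|\widehat{f}\|_{p'}, \qquad p\in[1,2].$$
Babenko--Beckner says $F(p)\ge 0$ on $[1,2]$, while Plancherel gives $F(2)=0$. Hence $p=2$ is a right-endpoint minimizer, so the one-sided derivative must satisfy $F'(2^-)\le 0$. The whole proof then reduces to computing this derivative.

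The computation proceeds by differentiating under the integral sign. Writing $I(p)=\int_{\mathbb{R}^d}|f|^p$, one obtains $\frac{d}{dp}\log\|f\|_p\big|_{p=2}=\frac14\int|f|^2\log|f|^2=-\frac14 S(f)$, using $I(2)=1$; the same identity applied to $\widehat{f}$ together with $\frac{dp'}{dp}\big|_{p=2}=-1$ gives $\frac{d}{dp}\log\|\widehat{f}\|_{p'}\big|_{p=2}=\frac14 S(\widehat{f})$. Differentiating $\log A_p=\frac12\bigl(p^{-1}\log p-(p')^{-1}\log p'\bigr)$ yields $\frac{d}{dp}\log A_p\big|_{p=2}=\frac{1-\log 2}{4}$. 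Substituting these into $F'(2^-)\le 0$ and clearing the common factor $\frac14$ produces exactly $d(1-\log 2)-S(f)-S(\widehat{f})\le 0$, which is the assertion.

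The main obstacle is twofold. First and foremost, the entire argument rests on the \emph{sharp} constant $A_p$: the non-sharp value $A_p=1$ only recovers Hirschman's weaker Inequality (\ref{H}), namely $S(f)+S(\widehat{f})\ge 0$, so one genuinely needs Beckner's deep evaluation of the optimal constant (via Gaussian extremizers, obtained from a two-point inequality and the central limit theorem, or via Lieb's approach). Second, one must justify differentiation under the integral and the passage to the one-sided derivative at $p=2$; this requires controlling $\int|f|^p\,\bigl|\log|f|\bigr|$ uniformly for $p$ near $2$. I would handle this by first proving the inequality for a dense class of well-behaved $f$ (for instance bounded, compactly supported, and bounded below on their support, so that $\log|f|$ is bounded there) and then removing the regularity by an approximation argument, keeping in mind the convention $0\log 0=0$.
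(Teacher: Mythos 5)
This theorem appears in the paper only as quoted background, with citations to Beckner and to Bialynicki-Birula--Mycielski; the paper itself contains no proof of it, so there is no internal argument to compare yours against. What you have written is, in substance, the classical proof from the cited literature: take the sharp Hausdorff--Young (Babenko--Beckner) inequality $\|\widehat{f}\|_{p'}\le A_p^d\|f\|_p$, note that it becomes the Plancherel identity at $p=2$, and differentiate at that endpoint. Your calculus is correct: with $\|f\|_2=1$ one gets $\tfrac{d}{dp}\log\|f\|_p\big|_{p=2}=-\tfrac14 S(f)$; the chain rule with $\tfrac{dp'}{dp}\big|_{p=2}=-1$ turns the dual-norm term into $+\tfrac14 S(\widehat{f})$; and $\tfrac{d}{dp}\log A_p\big|_{p=2}=\tfrac{1-\log 2}{4}$, so the endpoint condition $F'(2^-)\le 0$ is precisely $S(f)+S(\widehat{f})\ge d(1-\log 2)$. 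You also correctly isolate the real content: with the non-sharp constant $A_p=1$ this argument only reproduces Hirschman's bound $S(f)+S(\widehat{f})\ge 0$, so Beckner's evaluation of the optimal constant is the indispensable input.

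One caveat on your last step: passing from a dense class of well-behaved functions to all of $\mathcal{L}^2(\mathbb{R}^d)\setminus\{0\}$ is substantially more delicate than ``an approximation argument'' suggests. The entropy functional is not continuous under $\mathcal{L}^2$ convergence, and for arbitrary $f\in\mathcal{L}^2$ the integral defining $S(f)$ need not even be well defined (it can be of the form $\infty-\infty$). This is a known subtlety: the original differentiation proof was only later made fully rigorous for the general case. Since the paper states the theorem at the same level of informality (implicitly assuming both entropies exist), your sketch meets the standard of the quoted result, but you should not present the limiting step as routine.
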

Now one naturally asks whether there is a finite dimensional version of Shannon entropy and uncertainty principle. 
Let $\mathcal{H}$ be a finite dimensional Hilbert space. Given an orthonormal basis  $\{\tau_j\}_{j=1}^n$ for $\mathcal{H}$, the \textbf{ (finite) Shannon entropy}  at a point $h \in \mathcal{H}_\tau$ is defined as 
\begin{align*}
	S_\tau (h)\coloneqq - \sum_{j=1}^{n} \left|\left \langle \frac{h}{\|h\|}, \tau_j\right\rangle \right|^2\log \left|\left \langle \frac{h}{\|h\|}, \tau_j\right\rangle \right|^2\geq 0,
\end{align*}
where $\mathcal{H}_\tau\coloneqq \{h \in \mathcal{H}: \langle h , \tau_j \rangle \neq 0, 1\leq j \leq n\}$ \cite{DEUTSCH}. In 1983, Deutsch derived following uncertainty principle for Shannon entropy which is fundamental to several developments in Mathematics and   Physics \cite{DEUTSCH}.
\begin{theorem}\cite{DEUTSCH} (\textbf{Deutsch Uncertainty Principle})  \label{DU}
Let $\{\tau_j\}_{j=1}^n$,  $\{\omega_j\}_{j=1}^n$ be two orthonormal bases for a  finite dimensional Hilbert space $\mathcal{H}$. Then 
	\begin{align}\label{DUP}
2 \log n \geq S_\tau (h)+S_\omega (h)\geq -2 \log \left(\frac{1+\displaystyle \max_{1\leq j, k \leq n}|\langle\tau_j , \omega_k\rangle|}{2}\right)	\geq 0, \quad \forall h \in \mathcal{H}_\tau.
	\end{align}
\end{theorem}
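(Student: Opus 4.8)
The plan is to normalize and reduce everything to statements about two probability distributions. Take $\|h\|=1$ (the entropies are defined through $h/\|h\|$, so this is no loss), and set $a_j\coloneqq|\langle h,\tau_j\rangle|^2$ and $b_k\coloneqq|\langle h,\omega_k\rangle|^2$. Since $\{\tau_j\}_{j=1}^n$ and $\{\omega_k\}_{k=1}^n$ are orthonormal bases, Parseval's identity gives $\sum_{j=1}^n a_j=\sum_{k=1}^n b_k=1$, so $\{a_j\}$ and $\{b_k\}$ are probability vectors and $S_\tau(h)=-\sum_j a_j\log a_j$, $S_\omega(h)=-\sum_k b_k\log b_k$ are genuine Shannon entropies. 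The upper (leftmost) bound $S_\tau(h)+S_\omega(h)\le 2\log n$ is then immediate from the maximum-entropy principle: the entropy of a distribution on $n$ points is at most $\log n$, attained at the uniform distribution.

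For the lower bound I would first pass from Shannon entropy to min-entropy. Writing $P_\tau\coloneqq\max_{1\le j\le n}a_j$ and $P_\omega\coloneqq\max_{1\le k\le n}b_k$, the elementary estimate $-\sum_j a_j\log a_j\ge-\log P_\tau$ (since $a_j\le P_\tau$ forces $-a_j\log a_j\ge-a_j\log P_\tau$, and $\sum_j a_j=1$) yields
\[
S_\tau(h)+S_\omega(h)\ge-\log P_\tau-\log P_\omega=-\log(P_\tau P_\omega).
\]
Thus it suffices to prove the sharp geometric estimate $\sqrt{P_\tau P_\omega}\le\frac{1+c}{2}$, where $c\coloneqq\max_{1\le j,k\le n}|\langle\tau_j,\omega_k\rangle|$; squaring and applying $-\log$ then delivers exactly the bound $-2\log\left(\frac{1+c}{2}\right)$.

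The heart of the argument, and the step I expect to be the main obstacle, is this geometric estimate. Let $j_0$, $k_0$ achieve $P_\tau$, $P_\omega$, and let $\Pi=|h\rangle\langle h|$ be the orthogonal projection onto $\mathbb{C}h$. Decomposing $\langle\tau_{j_0},\omega_{k_0}\rangle=\langle\tau_{j_0},h\rangle\langle h,\omega_{k_0}\rangle+\langle(I-\Pi)\tau_{j_0},(I-\Pi)\omega_{k_0}\rangle$ and applying Cauchy--Schwarz to the second term (noting $\|(I-\Pi)\tau_{j_0}\|^2=1-P_\tau$ and $\|(I-\Pi)\omega_{k_0}\|^2=1-P_\omega$) gives
\[
\sqrt{P_\tau P_\omega}\le c+\sqrt{(1-P_\tau)(1-P_\omega)}.
\]
Substituting $\sqrt{P_\tau}=\cos\alpha$, $\sqrt{P_\omega}=\cos\beta$ with $\alpha,\beta\in[0,\pi/2]$ rewrites this as $\cos(\alpha+\beta)\le c$, whence
\[
\sqrt{P_\tau P_\omega}=\cos\alpha\cos\beta=\frac{1}{2}\big(\cos(\alpha-\beta)+\cos(\alpha+\beta)\big)\le\frac{1}{2}(1+c),
\]
using $\cos(\alpha-\beta)\le1$. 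This is the required bound. Finally, the rightmost inequality $-2\log\left(\frac{1+c}{2}\right)\ge0$ follows from $c\le1$ (Cauchy--Schwarz between the unit vectors $\tau_j$ and $\omega_k$), which makes $\frac{1+c}{2}\le1$. Assembling the three pieces gives the full chain in (\ref{DUP}).
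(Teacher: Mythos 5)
Your proof is correct, but it takes a genuinely different route from the paper's. The paper never proves Theorem \ref{DU} directly: it deduces it as a corollary of the Banach-space result (Theorem \ref{FDS}) specialized to $p=2$, $f_j=\langle\cdot,\tau_j\rangle$, $g_k=\langle\cdot,\omega_k\rangle$. There, the lower bound comes from writing $S_\tau(h)+S_\omega(h)$ as a single double sum against the joint weights $|\langle h,\tau_j\rangle|^2|\langle h,\omega_k\rangle|^2$ (which sum to $1$) and bounding each term $-2\log|\langle h,\tau_j\rangle\langle h,\omega_k\rangle|$ by $-2\log\left(\sup_{\|y\|=1}\max_{j,k}|\langle y,\tau_j\rangle\langle y,\omega_k\rangle|\right)$; this supremum is then estimated by $\frac{1+c}{2}$, with $c\coloneqq\max_{j,k}|\langle\tau_j,\omega_k\rangle|$, by \emph{citing} Buzano's inequality from the literature. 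You do two things differently: (i) you replace the joint-distribution step by the min-entropy reduction $S_\tau(h)+S_\omega(h)\geq-\log(P_\tau P_\omega)$, and (ii) you \emph{prove} the geometric estimate $\sqrt{P_\tau P_\omega}\leq\frac{1+c}{2}$ rather than citing it --- your projection decomposition, Cauchy--Schwarz on the $(I-\Pi)$ components, and the product-to-sum identity amount precisely to a proof of Buzano's inequality for the three unit vectors $h,\tau_{j_0},\omega_{k_0}$. So both arguments ultimately rest on the same geometric fact, imported in one case and derived in the other. What each buys: the paper's route works verbatim for Parseval p-frames on Banach spaces (which is the point of the paper) and keeps the Hilbert-space geometry as a black box; yours is self-contained and purely Hilbert-space, and it passes through the $h$-dependent intermediate bound $-\log(P_\tau P_\omega)$, which is pointwise at least as strong as the paper's $h$-independent one, though both collapse to the same final constant $-2\log\left(\frac{1+c}{2}\right)$.
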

Recently, author derived Banach space versions of  Donoho-Stark-Elad-Bruckstein-Ricaud-Torrésani uncertainty principle \cite{KRISHNA1},  Donoho-Stark approximate support  uncertainty principle \cite{KRISHNA3} and Ghobber-Jaming uncertainty principle \cite{KRISHNA2}. We then naturally ask what is the Banach space version of Inequality (\ref{DUP})?
In this paper, we are going to answer this question.

\section{Functional Deutsch   Uncertainty Principle}
In the paper,   $\mathbb{K}$ denotes $\mathbb{C}$ or $\mathbb{R}$ and $\mathcal{X}$ denotes a  finite dimensional Banach space over $\mathbb{K}$. Dual of $\mathcal{X}$ is denoted by $\mathcal{X}^*$. We need the notion of Parseval p-frames for Banach spaces.
\begin{definition}\cite{ALDROUBISUNTANG, CHRISTENSENSTOEVA}\label{A}
	Let $\mathcal{X}$  be a  finite dimensional Banach space over $\mathbb{K}$.  A collection $\{f_j\}_{j=1}^n$  in  $\mathcal{X}^*$  is said to be a \textbf{Parseval p-frame} ($1\leq p <\infty$) for $\mathcal{X}$ if
			\begin{align}\label{PF}
			\|x\|^p=\sum_{j=1}^n|f_j(x)|^p, \quad \forall x \in \mathcal{X}.
		\end{align}
\end{definition}
Note that (\ref{PF}) says that $\|f_j\|\leq 1$ for all $1\leq j \leq n$. Given a Parseval p-frame $\{f_j\}_{j=1}^n$   for $\mathcal{X}$, we define the \textbf{(finite) p-Shannon entropy} at a point $x \in \mathcal{X}_f$ as 

\begin{align*}
	S_f(x)\coloneqq -\sum_{j=1}^{n}\left|f_j\left(\frac{x}{\|x\|}\right)\right|^p\log \left|f_j\left(\frac{x}{\|x\|}\right)\right|^p\geq 0, 
\end{align*}
where $\mathcal{X}_f\coloneqq \{x \in \mathcal{X}:f_j(x)\neq 0, 1\leq j \leq n\}$.
Following is the fundamental  result of this paper. 
\begin{theorem} \label{FDS}(\textbf{Functional Deutsch Uncertainty Principle}) Let $\{f_j\}_{j=1}^n$ and $\{g_k\}_{k=1}^m$ be  Parseval p-frames  for a finite dimensional Banach space $\mathcal{X}$. Then 
	\begin{align*}
	\frac{1}{(nm)^\frac{1}{p}}	\leq \displaystyle\sup_{y \in \mathcal{X}, \|y\|=1}\left(\max_{1\leq j\leq n, 1\leq k\leq m}|f_j(y)g_k(y)|\right)
	\end{align*}
and 
	\begin{align}\label{FD}
		 \log (nm)\geq S_f (x)+S_g (x)\geq -p \log \left(\displaystyle\sup_{y \in \mathcal{X}_f\cap \mathcal{X}_g, \|y\|=1}\left(\max_{1\leq j\leq n, 1\leq k\leq m}|f_j(y)g_k(y)|\right)\right)> 0, \quad \forall x \in \mathcal{X}_f \cap \mathcal{X}_g.
	\end{align}
\end{theorem}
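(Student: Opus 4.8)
The plan is to fix $x \in \mathcal{X}_f \cap \mathcal{X}_g$, pass to the unit vector $u \coloneqq x/\|x\|$, and reduce everything to two probability distributions. Setting $a_j \coloneqq |f_j(u)|^p$ and $b_k \coloneqq |g_k(u)|^p$, the Parseval identity (\ref{PF}) gives $\sum_{j=1}^n a_j = \|u\|^p = 1 = \sum_{k=1}^m b_k$, and since $u \in \mathcal{X}_f \cap \mathcal{X}_g$ all the $a_j, b_k$ are strictly positive. The first inequality is the easiest and in fact holds for every unit vector $y$: from $\sum_{j} |f_j(y)|^p = 1$ one gets $\max_j |f_j(y)|^p \geq 1/n$, hence $\max_j |f_j(y)| \geq n^{-1/p}$, and likewise $\max_k |g_k(y)| \geq m^{-1/p}$. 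Multiplying these and using $\max_{j,k}|f_j(y)g_k(y)| = (\max_j |f_j(y)|)(\max_k |g_k(y)|)$, then taking the supremum over the unit sphere, yields $(nm)^{-1/p} \leq \sup_{y}\max_{j,k}|f_j(y)g_k(y)|$.

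For the upper bound in (\ref{FD}) I would invoke the classical fact that the Shannon entropy of a probability vector supported on $N$ atoms is at most $\log N$, which is Jensen's inequality for the concave logarithm: $S_f(x) = \sum_j a_j \log(1/a_j) \leq \log\left(\sum_j a_j\cdot a_j^{-1}\right) = \log n$, and similarly $S_g(x) \leq \log m$. Adding gives $S_f(x)+S_g(x) \leq \log(nm)$.

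The heart of the argument is the lower bound, and here the key is a Gibbs-type estimate. Since $a_j \leq \max_i a_i$ for all $j$, we have $-a_j\log a_j \geq -a_j \log(\max_i a_i)$; summing against $\sum_j a_j = 1$ collapses to $S_f(x) \geq -\log \max_j a_j = -p\log \max_j |f_j(u)|$, and analogously $S_g(x) \geq -p\log\max_k|g_k(u)|$. Adding and factorizing the product of maxima as above gives
\[
S_f(x)+S_g(x) \geq -p\log\left(\max_{1\leq j\leq n,\,1\leq k\leq m}|f_j(u)g_k(u)|\right).
\]
Since $u$ has unit norm and lies in $\mathcal{X}_f \cap \mathcal{X}_g$, the bracketed quantity is bounded above by the supremum appearing in (\ref{FD}); as $t \mapsto -p\log t$ is decreasing, this delivers the stated lower bound.

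The one step I expect to require genuine care is the final strict inequality. The honest source of strict positivity is pointwise: because $u \in \mathcal{X}_f\cap\mathcal{X}_g$ all $a_j>0$ and sum to $1$, so (with $n\geq 2$) each $a_j<1$, forcing $\max_{j,k}|f_j(u)g_k(u)|<1$ and hence $-p\log(\max_{j,k}|f_j(u)g_k(u)|)>0$; combined with the displayed inequality this gives $S_f(x)+S_g(x)>0$. The delicate point is that the supremum over the open set $\mathcal{X}_f\cap\mathcal{X}_g$ can tend to $1$ along sequences approaching the boundary where some functional vanishes, so the strict positivity must be extracted from the value of the maximum at the given point $u$ rather than from the supremum itself; this is the subtlety I would watch most closely in completing the proof.
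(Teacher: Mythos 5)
Your proof is correct, and both halves take a somewhat different route from the paper's. For the first inequality, the paper multiplies the two Parseval identities and expands $1=\sum_{j=1}^n\sum_{k=1}^m|f_j(z)g_k(z)|^p$ as a double sum, bounding each of the $nm$ terms by the supremum; you instead use the pointwise estimate $\max_j|f_j(y)|^p\geq 1/n$, $\max_k|g_k(y)|^p\geq 1/m$ together with the factorization $\max_{j,k}|f_j(y)g_k(y)|=\left(\max_j|f_j(y)|\right)\left(\max_k|g_k(y)|\right)$, which proves the stronger statement that the maximum at \emph{every} unit vector is at least $(nm)^{-1/p}$. The Jensen upper bound is identical to the paper's. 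For the lower bound, the paper again merges everything into a double sum: writing $a_j=|f_j(u)|^p$, $b_k=|g_k(u)|^p$ with $u=x/\|x\|$, it uses both normalizations to get $S_f(x)+S_g(x)=-p\sum_{j,k}a_jb_k\log|f_j(u)g_k(u)|$ and then replaces each logarithm by the logarithm of the supremum. Your Gibbs estimate $S_f(x)\geq-\log\max_j a_j$, applied to each entropy separately and followed by the same factorization of maxima, avoids the double sum and yields the sharper pointwise intermediate bound $S_f(x)+S_g(x)\geq -p\log\max_{j,k}|f_j(u)g_k(u)|$, from which the sup-form in (\ref{FD}) follows by monotonicity of $-\log$. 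The mechanisms are of comparable difficulty, but your version isolates a per-point inequality that the paper's formulation discards.

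Your closing concern about the strict inequality is not mere caution: it identifies a real defect that the paper's own proof silently skips, since that proof stops at $-p\log(\sup)$ and never addresses the final ``$>0$'' in (\ref{FD}). The sup-based bound can in fact vanish. Take $\mathcal{X}=\mathbb{K}^2$ with the $\ell^p$ norm, $n=m=2$, and let $\{f_j\}_{j=1}^2=\{g_k\}_{k=1}^2$ be the coordinate functionals, which form a Parseval p-frame by (\ref{PF}). The unit vectors $y_\epsilon=\left((1-\epsilon^p)^{1/p},\epsilon\right)$ lie in $\mathcal{X}_f\cap\mathcal{X}_g$ for every small $\epsilon>0$, and $|f_1(y_\epsilon)g_1(y_\epsilon)|=(1-\epsilon^p)^{2/p}\to 1$, so the supremum appearing in (\ref{FD}) equals $1$ and $-p\log(\sup)=0$: the rightmost inequality of (\ref{FD}) is false as stated. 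Your pointwise substitute, $S_f(x)+S_g(x)\geq -p\log\max_{j,k}|f_j(u)g_k(u)|>0$ whenever $n\geq 2$ or $m\geq 2$ (and positivity genuinely fails for $n=m=1$), is the correct repair, and your proof of it is complete.
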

\begin{proof}
	Let $z \in \mathcal{X}$ be such that $\|z\|=1$. Then 
	\begin{align*}
		1&=\left(\sum_{j=1}^n|f_j(z)|^p\right)\left(\sum_{k=1}^m|g_k(z)|^p\right)=\sum_{j=1}^n\sum_{k=1}^m
	|f_j(z)g_k(z)|^p\\
	&\leq \sum_{j=1}^n\sum_{k=1}^m\left(\displaystyle\sup_{y \in \mathcal{X}, \|y\|=1}\left(\max_{1\leq j\leq n, 1\leq k\leq m}|f_j(y)g_k(y)|\right)\right)^p\\
	&=\left(\displaystyle\sup_{y \in \mathcal{X}, \|y\|=1}\left(\max_{1\leq j\leq n, 1\leq k\leq m}|f_j(y)g_k(y)|\right)\right)^pmn
\end{align*}
which gives 
\begin{align*}
	\frac{1}{mn}\leq \left(\displaystyle\sup_{y \in \mathcal{X}, \|y\|=1}\left(\max_{1\leq j\leq n, 1\leq k\leq m}|f_j(y)g_k(y)|\right)\right)^p.
\end{align*}
Since  $1=\sum_{j=1}^n\left|f_j\left(\frac{x}{\|x\|}\right)\right|^p$ for all  $x \in \mathcal{X}\setminus \{0\}$,  $1=\sum_{k=1}^m\left|g_k\left(\frac{x}{\|x\|}\right)\right|^p$ for all  $x \in \mathcal{X}\setminus \{0\}$ and $\log$ function is concave, using Jensen's inequality (see \cite{STEELE}) we get 
\begin{align*}
	S_f (x)+S_g (x)&=\sum_{j=1}^{n}\left|f_j\left(\frac{x}{\|x\|}\right)\right|^p\log \left(\frac{1}{\left|f_j\left(\frac{x}{\|x\|}\right)\right|^p}\right) +	\sum_{k=1}^{m}\left|g_k\left(\frac{x}{\|x\|}\right)\right|^p\log \left(\frac{1}{\left|g_k\left(\frac{x}{\|x\|}\right)\right|^p}\right)\\
	&\leq \log \left(\sum_{j=1}^{n}\left|f_j\left(\frac{x}{\|x\|}\right)\right|^p \frac{1}{\left|f_j\left(\frac{x}{\|x\|}\right)\right|^p}\right)+\log \left(\sum_{k=1}^{m}\left|g_k\left(\frac{x}{\|x\|}\right)\right|^p \frac{1}{\left|g_k\left(\frac{x}{\|x\|}\right)\right|^p}\right)\\
	&=\log n+\log m=\log (nm), \quad \forall x \in \mathcal{X}_f \cap \mathcal{X}_g.
\end{align*}
Let $x \in \mathcal{X}_f \cap \mathcal{X}_g$. Then 

\begin{align*}
	S_f (x)+S_g (x)&=	-\sum_{j=1}^n\sum_{k=1}^m\left|f_j\left(\frac{x}{\|x\|}\right)\right|^p\left|g_k\left(\frac{x}{\|x\|}\right)\right|^p\left[\log \left|f_j\left(\frac{x}{\|x\|}\right)\right|^p+\log \left|g_k\left(\frac{x}{\|x\|}\right)\right|^p\right]\\
&=-\sum_{j=1}^n\sum_{k=1}^m\left|f_j\left(\frac{x}{\|x\|}\right)\right|^p\left|g_k\left(\frac{x}{\|x\|}\right)\right|^p\log \left|f_j\left(\frac{x}{\|x\|}\right)g_k\left(\frac{x}{\|x\|}\right)\right|^p\\
&=-p\sum_{j=1}^n\sum_{k=1}^m\left|f_j\left(\frac{x}{\|x\|}\right)\right|^p\left|g_k\left(\frac{x}{\|x\|}\right)\right|^p\log \left|f_j\left(\frac{x}{\|x\|}\right)g_k\left(\frac{x}{\|x\|}\right)\right|\\
&\geq -p\sum_{j=1}^n\sum_{k=1}^m\left|f_j\left(\frac{x}{\|x\|}\right)\right|^p\left|g_k\left(\frac{x}{\|x\|}\right)\right|^p\log\left(\displaystyle\sup_{y \in \mathcal{X}_f\cap \mathcal{X}_g, \|y\|=1}\left(\max_{1\leq j\leq n, 1\leq k\leq m}|f_j(y)g_k(y)|\right)\right)\\
&=-p\log\left(\displaystyle\sup_{y \in \mathcal{X}_f\cap \mathcal{X}_g, \|y\|=1}\left(\max_{1\leq j\leq n, 1\leq k\leq m}|f_j(y)g_k(y)|\right)\right)\sum_{j=1}^n\sum_{k=1}^m\left|f_j\left(\frac{x}{\|x\|}\right)\right|^p\left|g_k\left(\frac{x}{\|x\|}\right)\right|^p\\
&=-p\log\left(\displaystyle\sup_{y \in \mathcal{X}_f\cap \mathcal{X}_g, \|y\|=1}\left(\max_{1\leq j\leq n, 1\leq k\leq m}|f_j(y)g_k(y)|\right)\right).
\end{align*}
\end{proof}
\begin{corollary}
	Theorem \ref{DU} follows from Theorem \ref{FDS}.
\end{corollary}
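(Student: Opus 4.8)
The plan is to recover Theorem \ref{DU} as the special case $p=2$, $\mathcal{X}=\mathcal{H}$, $n=m$ of Theorem \ref{FDS}. First I would check that two orthonormal bases produce two Parseval $2$-frames: setting $f_j\coloneqq\langle\cdot,\tau_j\rangle$ and $g_k\coloneqq\langle\cdot,\omega_k\rangle$ in $\mathcal{H}^*$, Parseval's identity gives $\|x\|^2=\sum_{j=1}^n|\langle x,\tau_j\rangle|^2=\sum_{j=1}^n|f_j(x)|^2$ and similarly for $\{g_k\}$, so both are Parseval $2$-frames in the sense of Definition \ref{A}. Under these identifications one has $S_f=S_\tau$, $S_g=S_\omega$, $\mathcal{X}_f=\mathcal{H}_\tau$, $\mathcal{X}_g=\mathcal{H}_\omega$ and $\log(nm)=2\log n$, so the upper bound of Theorem \ref{FDS} is literally the upper bound of Theorem \ref{DU} and the two entropy sums coincide.

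It then remains to match the lower bounds, i.e.\ to show
\begin{align*}
-2\log\left(\sup_{y\in\mathcal{H}_\tau\cap\mathcal{H}_\omega,\,\|y\|=1}\ \max_{1\le j,k\le n}|\langle y,\tau_j\rangle\langle y,\omega_k\rangle|\right)\ \ge\ -2\log\left(\frac{1+\max_{1\le j,k\le n}|\langle\tau_j,\omega_k\rangle|}{2}\right).
\end{align*}
Since $-2\log$ is decreasing, since the supremum over $\mathcal{H}_\tau\cap\mathcal{H}_\omega$ is at most the supremum over the whole unit sphere, and since $\sup_y\max_{j,k}\le\max_{j,k}\sup_y$, this reduces to the sharp pointwise estimate
\begin{align*}
\sup_{y\in\mathcal{H},\,\|y\|=1}|\langle y,\tau_j\rangle\langle y,\omega_k\rangle|\ \le\ \frac{1+|\langle\tau_j,\omega_k\rangle|}{2},\qquad 1\le j,k\le n,
\end{align*}
after which taking the maximum over $j,k$ and applying $-2\log$ delivers the claim; the rightmost $\ge 0$ of Theorem \ref{DU} is then inherited from the strict positivity $>0$ already established in Theorem \ref{FDS} (equivalently from $\tfrac12(1+\max_{j,k}|\langle\tau_j,\omega_k\rangle|)\le 1$).

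The only genuine obstacle is this pointwise estimate, which I would settle by a dimensional reduction followed by a one-variable optimization. Fix $j,k$ and put $V\coloneqq\operatorname{span}\{\tau_j,\omega_k\}$. For a unit vector $y$, decompose $y=y_0+y_\perp$ with $y_0\in V$ and $y_\perp\perp V$; then $\langle y,\tau_j\rangle=\langle y_0,\tau_j\rangle$, $\langle y,\omega_k\rangle=\langle y_0,\omega_k\rangle$ and $\|y_0\|\le 1$, so the supremum is attained on the unit sphere of the at most two-dimensional space $V$. In the real case I would place $\tau_j,\omega_k$ at angles $\pm\alpha$ with $\cos2\alpha=\langle\tau_j,\omega_k\rangle$ and write the admissible $y$ at angle $\phi$; the product-to-sum identity yields $\langle y,\tau_j\rangle\langle y,\omega_k\rangle=\tfrac12(\cos2\alpha+\cos2\phi)$, whose modulus is maximized over $\phi$ at $\tfrac12(1+|\cos2\alpha|)=\tfrac12(1+|\langle\tau_j,\omega_k\rangle|)$. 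In the complex case I would first absorb phases so that $\langle\tau_j,\omega_k\rangle\ge 0$ and the coordinates of the extremal $y$ are aligned, reducing to the same optimization with the identical value $\tfrac12(1+|\langle\tau_j,\omega_k\rangle|)$. Substituting this sharp bound into the reduction above completes the deduction of Theorem \ref{DU} from Theorem \ref{FDS}.
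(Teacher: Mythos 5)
Your proposal is correct, and it diverges from the paper at exactly one point: the key estimate. The paper's proof sets up the same identification $f_j=\langle\cdot,\tau_j\rangle$, $g_k=\langle\cdot,\omega_k\rangle$ and then disposes of the pointwise bound
\begin{align*}
\sup_{\|h\|=1}\max_{1\le j,k\le n}|\langle h,\tau_j\rangle\langle h,\omega_k\rangle|\ \le\ \frac{1+\max_{1\le j,k\le n}|\langle\tau_j,\omega_k\rangle|}{2}
\end{align*}
in one line by citing Buzano's inequality $|\langle h,\tau_j\rangle\langle h,\omega_k\rangle|\le\|h\|^2\bigl(\|\tau_j\|\|\omega_k\|+|\langle\tau_j,\omega_k\rangle|\bigr)/2$, whereas you prove that same estimate from scratch: projection onto $V=\operatorname{span}\{\tau_j,\omega_k\}$, phase alignment in the complex case, and the product-to-sum identity $\cos(\phi-\alpha)\cos(\phi+\alpha)=\tfrac12(\cos 2\phi+\cos 2\alpha)$, which is in effect a self-contained proof of the unit-vector case of Buzano's inequality. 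The paper's route is shorter and leans on a known result; yours is elementary and has two side benefits: the explicit optimization shows the constant $\tfrac12(1+|\langle\tau_j,\omega_k\rangle|)$ is actually attained on $V$ (so the estimate is sharp), and you spell out the small glue steps the paper leaves implicit — that the supremum over the unit sphere of $\mathcal{H}_\tau\cap\mathcal{H}_\omega$ in Theorem \ref{FDS} is dominated by the supremum over the full unit sphere, that $-2\log$ reverses inequalities, that $\log(nm)=2\log n$ recovers Deutsch's upper bound, and that the final $\ge 0$ follows from $\tfrac12(1+\max_{j,k}|\langle\tau_j,\omega_k\rangle|)\le 1$. Your complex-case reduction is stated as a sketch, but it is the standard and correct one (writing $\omega_k=r\tau_j+\sqrt{1-r^2}\,u$ with $r\ge 0$ and noting the maximizing coefficients can be taken real and nonnegative), so nothing essential is missing.
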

\begin{proof}
	Let $\{\tau_j\}_{j=1}^n$,  $\{\omega_j\}_{j=1}^n$ be two orthonormal bases   for a  finite dimensional Hilbert space $\mathcal{H}$.	Define 
	\begin{align*}
		f_j:\mathcal{H} \ni h \mapsto \langle h, \tau_j \rangle \in \mathbb{K}; \quad g_j:\mathcal{H} \ni h \mapsto \langle h, \omega_j \rangle \in \mathbb{K}, \quad \forall 1\leq j\leq n.
	\end{align*}
Now by using Buzano inequality (see \cite{BUZANO, FFUJIIKUBO}) we get 
\begin{align*}
	\displaystyle\sup_{h \in \mathcal{H}, \|h\|=1}\left(\max_{1\leq j, k\leq n}|f_j(h)g_k(h)|\right)&=\displaystyle\sup_{h \in \mathcal{H}, \|h\|=1}\left(\max_{1\leq j,k \leq n}|\langle h, \tau_j \rangle||\langle h, \omega_k \rangle|\right)\\
	&\leq \displaystyle\sup_{h \in \mathcal{H}, \|h\|=1}\left(\max_{1\leq j,k \leq n}\left(\|h\|^2\frac{\|\tau_j\|\|\omega_k\|+|\langle \tau_j, \omega_k \rangle |}{2}\right)\right)\\
	&=\frac{1+\displaystyle\max_{1\leq j, k \leq n}|\langle \tau_j, \omega_k \rangle |}{2}.
\end{align*}
\end{proof}
Theorem  \ref{FDS}  brings the following question.
\begin{question}
	Given $p$, $m$, $n$ and a Banach space $\mathcal{X}$, for which pairs of Parseval p-frames $\{f_j\}_{j=1}^n$ and $\{g_k\}_{k=1}^m$ for $\mathcal{X}$, we have equality in Inequality (\ref{FD})?
\end{question}
Next we derive a dual inequality of (\ref{FD}). For this we need dual of Definition \ref{A}.
\begin{definition}\cite{TEREKHIN, CASAZZACHRISTENSENSTOEVA, TEREKHIN2}
	Let $\mathcal{X}$  be a  finite dimensional Banach space over $\mathbb{K}$.  A collection $\{\tau_j\}_{j=1}^n$  in  $\mathcal{X}$  is said to be a \textbf{Parseval p-frame} ($1\leq p <\infty$) for $\mathcal{X}^*$ if
\begin{align}\label{PFF}
	\|f\|^p=\sum_{j=1}^n|f(\tau_j)|^p, \quad \forall f \in \mathcal{X}^*.
\end{align}	
\end{definition}
Note that (\ref{PFF}) says that 
\begin{align*}
	\|\tau_j\|=\displaystyle \sup_{f\in  \mathcal{X}^*, \|f\|=1}|f(\tau_j)|\leq \sup_{f\in  \mathcal{X}^*, \|f\|=1}\left(\sum_{j=1}^n|f(\tau_j)|^p\right)^\frac{1}{p}=\sup_{f\in  \mathcal{X}^*, \|f\|=1}\|f\|=1, \quad \forall 1\leq j \leq n.
\end{align*}
Given a Parseval p-frame $\{\tau_j\}_{j=1}^n$   for $\mathcal{X}^*$, we define the  \textbf{(finite) p-Shannon entropy} at a point $f \in \mathcal{X}^* _\tau$ as 
\begin{align*}
	S_\tau(f)\coloneqq -\sum_{j=1}^{n}\left|\frac{f(\tau_j)}{\|f\|}\right|^p\log \left|\frac{f(\tau_j)}{\|f\|}\right|^p\geq 0,
\end{align*}
where $\mathcal{X}^*_\tau\coloneqq \{f \in \mathcal{X}^*:f(\tau_j)\neq 0, 1\leq j \leq n\}$.
We now have the following dual to Theorem \ref{FDS}.
\begin{theorem}\label{DUALFDS}
	(\textbf{Functional Deutsch Uncertainty Principle}) Let $\{\tau_j\}_{j=1}^n$ and $\{\omega_k\}_{k=1}^m$ be  two Parseval p-frames  for the dual $\mathcal{X}^*$ of a finite dimensional Banach space $\mathcal{X}$. Then 
	\begin{align*}
		\frac{1}{(nm)^\frac{1}{p}}	\leq \displaystyle\sup_{g \in \mathcal{X}^*, \|g\|=1}\left(\max_{1\leq j\leq n, 1\leq k\leq m}|g(\tau_j)g(\omega_k)|\right)
	\end{align*}
	and 
	\begin{align}\label{DFDS}
	 \log (nm) \geq	S_\tau (f)+S_\omega (f)\geq -p \log \left(\displaystyle\sup_{g \in \mathcal{X}^*_\tau \cap \mathcal{X}^*_\omega, \|g\|=1}\left(\max_{1\leq j\leq n, 1\leq k\leq m}|g(\tau_j)g(\omega_k)|\right)\right)> 0, \quad \forall f \in \mathcal{X}^*_\tau \cap \mathcal{X}^*_\omega.
	\end{align}
\end{theorem}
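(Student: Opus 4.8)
The plan is to run the proof of Theorem \ref{FDS} verbatim, simply dualizing every ingredient. The dictionary of replacements is: a point $x\in\mathcal X$ becomes a functional $f\in\mathcal X^*$; the scalars $f_j(x)$ and $g_k(x)$ become the pairings $f(\tau_j)$ and $f(\omega_k)$; the Parseval identity $\|x\|^p=\sum_{j}|f_j(x)|^p$ becomes its dual (\ref{PFF}), namely $\|f\|^p=\sum_j|f(\tau_j)|^p$; and the supremum over the unit sphere of $\mathcal X$ becomes the supremum over the unit sphere of $\mathcal X^*$. The key observation is that the proof of Theorem \ref{FDS} used nothing about $\mathcal X$ beyond the two Parseval identities, the bound $|f_j(x)|\le\|x\|$, and concavity of $\log$; each of these has an exact counterpart here, so no new analytic input is required.

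First I would prove the lower bound on the supremum. Fix $g\in\mathcal X^*$ with $\|g\|=1$. Applying (\ref{PFF}) to each of the two frames and multiplying the resulting identities gives $1=\|g\|^p\|g\|^p=\big(\sum_j|g(\tau_j)|^p\big)\big(\sum_k|g(\omega_k)|^p\big)=\sum_{j}\sum_{k}|g(\tau_j)g(\omega_k)|^p$. Bounding each of the $nm$ summands by the $p$-th power of the supremum yields $1\le nm\,\big(\sup_{\|g\|=1}\max_{j,k}|g(\tau_j)g(\omega_k)|\big)^p$, which rearranges to the first displayed inequality. This is precisely the dual of feeding a unit vector $z$ into both frame identities in Theorem \ref{FDS}.

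Next I would establish the two-sided estimate (\ref{DFDS}). For the upper bound I would use that $\sum_j|f(\tau_j)/\|f\||^p=1$ and $\sum_k|f(\omega_k)/\|f\||^p=1$ for every $f\in\mathcal X^*\setminus\{0\}$, and apply Jensen's inequality to the concave map $\log$ exactly as in Theorem \ref{FDS}, splitting $S_\tau(f)+S_\omega(f)$ into two terms bounded by $\log n$ and $\log m$. For the lower bound, fix $f\in\mathcal X^*_\tau\cap\mathcal X^*_\omega$, multiply the two unit Parseval sums to form a double sum with weights $w_{jk}=|f(\tau_j)/\|f\||^p|f(\omega_k)/\|f\||^p$ (which sum to $1$), combine the two logarithms into $\log|f(\tau_j)f(\omega_k)|^p=p\log|f(\tau_j)f(\omega_k)|$, and then replace each $\log|f(\tau_j)f(\omega_k)|$ by the logarithm of the supremum over $\mathcal X^*_\tau\cap\mathcal X^*_\omega$. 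Because of the overall minus sign and the nonnegativity of the weights, this produces $S_\tau(f)+S_\omega(f)\ge -p\log(\sup)$.

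The only point that is not a purely mechanical translation is the terminal strict inequality $>0$. I would obtain it directly from the fact that $f\in\mathcal X^*_\tau\cap\mathcal X^*_\omega$ has $f(\tau_j)\neq 0$ for every $j$: when $n\ge 2$ the probability vector $\big(|f(\tau_j)/\|f\||^p\big)_{j=1}^n$ has at least two strictly positive entries, so its Shannon entropy $S_\tau(f)$ is strictly positive (it vanishes only for a one-point mass), and likewise $S_\omega(f)>0$ when $m\ge 2$. Hence $S_\tau(f)+S_\omega(f)>0$. I expect this strict-positivity step to be the only real subtlety, and it is best read as a statement about the entropy sum rather than about $-p\log(\sup)$: the restricted supremum can in degenerate cases (for instance two coincident frames) approach $1$, making $-p\log(\sup)$ equal to $0$, so care is needed to route the strict inequality through the entropy rather than through the supremum bound.
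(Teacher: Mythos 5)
Your proof of the two inequalities that the paper actually establishes is correct and is essentially the paper's own argument: the same multiplication of the two dual Parseval identities (\ref{PFF}) for a unit functional to get $1\le nm(\sup)^p$, the same Jensen/concavity step for the upper bound $\log(nm)$, and the same weighted double-sum manipulation for the lower bound $-p\log(\sup)$. On one point your version is cleaner: you bound $|f(\tau_j)f(\omega_k)|/\|f\|^2$ by the supremum over the unit sphere of $\mathcal{X}^*_\tau\cap\mathcal{X}^*_\omega$ throughout, which is legitimate since $f/\|f\|$ lies in that set; the paper instead bounds by the supremum over all of $\mathcal{X}^*$ and then silently swaps in the restricted supremum, presenting as an equality a step that is really the observation you made explicit.

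Where you genuinely depart from the paper is the terminal claim that $-p\log(\sup)>0$. The paper's proof never addresses it, and your diagnosis is correct: that claim says the restricted supremum is strictly less than $1$, and this can fail. For instance, take $\mathcal{X}=\ell^q_2$ ($q$ the conjugate index of $p$) and $\tau_1=\omega_1=e_1$, $\tau_2=\omega_2=e_2$, so that $\|f\|^p=|f(e_1)|^p+|f(e_2)|^p$ for all $f\in\mathcal{X}^*$; unit functionals $g=(g_1,g_2)$ with $g_1,g_2\neq 0$ and $|g_1|\to 1$ show the supremum over $\mathcal{X}^*_\tau\cap\mathcal{X}^*_\omega$ equals $1$ (unattained), hence $-p\log(\sup)=0$. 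Your rerouting of strict positivity through the entropy is sound as far as it goes: for $f\in\mathcal{X}^*_\tau$ the weights $|f(\tau_j)/\|f\||^p$ are all strictly positive and sum to $1$, so $S_\tau(f)>0$ whenever $n\ge 2$. But note two caveats. First, this proves $S_\tau(f)+S_\omega(f)>0$, not the literal chain in (\ref{DFDS}); it is a correction of the statement rather than a proof of it, since $S_\tau(f)+S_\omega(f)\ge -p\log(\sup)$ together with $S_\tau(f)+S_\omega(f)>0$ does not yield $-p\log(\sup)>0$. Second, it requires $\max(n,m)\ge 2$; if $n=m=1$ every quantity in the chain is $0$. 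In summary, your proposal reproduces the paper's proof of everything the paper proves, and correctly identifies that the one remaining claim is not provable as stated.
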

\begin{proof}
	Let $h \in \mathcal{X}^*$ be such that $\|h\|=1$. Then 
\begin{align*}
	1&=\left(\sum_{j=1}^n|h(\tau_j)|^p\right)\left(\sum_{k=1}^m|h(\omega_k)|^p\right)=\sum_{j=1}^n\sum_{k=1}^m
	|h(\tau_j)h(\omega_k)|^p\\
	&\leq \sum_{j=1}^n\sum_{k=1}^m\left(\displaystyle\sup_{g \in \mathcal{X}^*, \|g\|=1}\left(\max_{1\leq j\leq n, 1\leq k\leq m}|g(\tau_j)g(\omega_k)|\right)\right)^p\\
	&=\left(\displaystyle\sup_{g \in \mathcal{X}^*, \|g\|=1}\left(\max_{1\leq j\leq n, 1\leq k\leq m}|g(\tau_j)g(\omega_k)|\right)\right)^pmn
\end{align*}	
which gives 
\begin{align*}
	\frac{1}{mn}\leq \left(\displaystyle\sup_{g \in \mathcal{X}^*, \|g\|=1}\left(\max_{1\leq j\leq n, 1\leq k\leq m}|g(\tau_j)g(\omega_k)|\right)\right)^p.
\end{align*}
Since  $1=\sum_{j=1}^n\left|\frac{f(\tau_j)}{\|f\|}\right|^p$ for all  $f \in \mathcal{X}^* \setminus \{0\}$,  $1=\sum_{k=1}^m\left|\frac{f(\omega_k)}{\|f\|}\right|^p$ for all  $f\in  \mathcal{X}^*\setminus \{0\}$ and $\log$ function is concave, using Jensen's inequality  we get 
\begin{align*}
S_\tau (f)+S_\omega (f)&=\sum_{j=1}^{n}\left|\frac{f(\tau_j)}{\|f\|}\right|^p\log \left(\frac{1}{\left|\frac{f(\tau_j)}{\|f\|}\right|^p}\right)+\sum_{k=1}^{m}\left|\frac{f(\omega_k)}{\|f\|}\right|^p\log \left(\frac{1}{\left|\frac{f(\omega_k)}{\|f\|}\right|^p}\right)\\
&\leq \log \left(\sum_{j=1}^{n}\left|\frac{f(\tau_j)}{\|f\|}\right|^p\frac{1}{\left|\frac{f(\tau_j)}{\|f\|}\right|^p}\right)+\log \left(\sum_{k=1}^{m}\left|\frac{f(\omega_k)}{\|f\|}\right|^p\frac{1}{\left|\frac{f(\omega_k)}{\|f\|}\right|^p}\right)\\
&=\log n+\log m=\log (nm), \quad \forall f \in \mathcal{X}^*_\tau \cap \mathcal{X}^*_\omega.
\end{align*}
Let $f \in\mathcal{X}^*_\tau \cap \mathcal{X}^*_\omega$. Then 	

\begin{align*}
		S_\tau (f)+S_\omega (f)&=-\sum_{j=1}^n\sum_{k=1}^m\left|\frac{f(\tau_j)}{\|f\|}\right|^p\left|\frac{f(\omega_k)}{\|f\|}\right|^p\left[\log \left|\frac{f(\tau_j)}{\|f\|}\right|^p+\log \left|\frac{f(\omega_k)}{\|f\|}\right|^p\right]\\
		&=-\sum_{j=1}^n\sum_{k=1}^m\left|\frac{f(\tau_j)}{\|f\|}\right|^p\left|\frac{f(\omega_k)}{\|f\|}\right|^p\log \left|\frac{f(\tau_j)}{\|f\|}\frac{f(\omega_k)}{\|f\|}\right|^p\\
		&=-p\sum_{j=1}^n\sum_{k=1}^m\left|\frac{f(\tau_j)}{\|f\|}\right|^p\left|\frac{f(\omega_k)}{\|f\|}\right|^p\log \left|\frac{f(\tau_j)}{\|f\|}\frac{f(\omega_k)}{\|f\|}\right|\\
		&\geq -p \sum_{j=1}^n\sum_{k=1}^m\left|\frac{f(\tau_j)}{\|f\|}\right|^p\left|\frac{f(\omega_k)}{\|f\|}\right|^p\log \left(\displaystyle\sup_{g \in \mathcal{X}^*, \|g\|=1}\left(\max_{1\leq j\leq n, 1\leq k\leq m}|g(\tau_j)g(\omega_k)|\right)\right)\\
		&=-p\log \left(\displaystyle\sup_{g \in \mathcal{X}^*_\tau \cap \mathcal{X}^*_\omega, \|g\|=1}\left(\max_{1\leq j\leq n, 1\leq k\leq m}|g(\tau_j)g(\omega_k)|\right)\right)\sum_{j=1}^n\sum_{k=1}^m\left|\frac{f(\tau_j)}{\|f\|}\right|^p\left|\frac{f(\omega_k)}{\|f\|}\right|^p\\
		&=-p\log \left(\displaystyle\sup_{g \in \mathcal{X}^*_\tau \cap \mathcal{X}^*_\omega, \|g\|=1}\left(\max_{1\leq j\leq n, 1\leq k\leq m}|g(\tau_j)g(\omega_k)|\right)\right).
\end{align*}
\end{proof}
Theorem  \ref{DUALFDS}  again gives the following question.
\begin{question}
	Given $p$, $m$, $n$ and a Banach space $\mathcal{X}$, for which pairs of Parseval p-frames $\{\tau_j\}_{j=1}^n$ and $\{\omega_k\}_{k=1}^m$ for $\mathcal{X}^*$, we have equality in Inequality (\ref{DFDS})?
\end{question}
Author is aware of the improvement of Theorem \ref{DU} by Maassen and Uffink \cite{MAASSENUFFINK}  (cf. \cite{DEMBOCOVERTHOMAS}) (motivated from a conjecture of Kraus \cite{KRAUS}) but unable to derive Maassen-Uffink uncertainty principle from Theorem \ref{FDS}.

Motivated from R\'{e}nyi entropy, we can easily generalize the notion of p-Shannon entropy and as follows. 
Given a Parseval p-frame $\{f_j\}_{j=1}^n$   for $\mathcal{X}$, we define the \textbf{(finite) p-R\'{e}nyi entropy   of order $\alpha \in (0, \infty),$ $\alpha \neq 1$} at a point $x \in \mathcal{X}_f$  as 
\begin{align*}
	R_{f, \alpha}(x)\coloneqq \frac{1}{1-\alpha}\log\left(\sum_{j=1}^{n}\left|f_j\left(\frac{x}{\|x\|}\right)\right|^{p\alpha}\right). 
\end{align*}
Given a Parseval p-frame $\{\tau_j\}_{j=1}^n$   for $\mathcal{X}^*$, we define the  \textbf{(finite) p-R\'{e}nyi  entropy  of order $\alpha \in (0, \infty),$ $\alpha \neq 1$}   at a point $f \in \mathcal{X}^* _\tau$  as 
\begin{align*}
	R_{\tau, \alpha}(f)\coloneqq \frac{1}{1-\alpha}\log\left(\sum_{j=1}^{n}\left|\frac{f(\tau_j)}{\|f\|}\right|^{p\alpha}\right).
\end{align*}
Using L'H\^{o}pital rule, we have 
\begin{align*}
\lim_{\alpha \to 1}R_{f, \alpha}(\cdot)=S_f(\cdot), \quad 	\lim_{\alpha \to 1}R_{\tau, \alpha}(\cdot)=S_\tau(\cdot).
\end{align*}
Theorem \ref{FDS} and Theorem \ref{DUALFDS} the result in following problems.
\begin{problem}
Given a finite dimensional Banach space $\mathcal{X}$, let $\mathcal{P}(\mathcal{X})$ be the set of all finite Parseval p-frames  for  $\mathcal{X}$.	What is the best function $\Psi:((0,1)\cup (1, \infty))\times \mathcal{P}(\mathcal{X})\times \mathcal{P}(\mathcal{X}) \to (0, \infty)$ satisfying the following: If $\{f_j\}_{j=1}^n$ and $\{g_k\}_{k=1}^m$ are   Parseval p-frames  for  $\mathcal{X}$, then 
\begin{align*}
	R_{f, \alpha} (x)+R_{g, \alpha} (x)\geq \Psi (\alpha, \{f_j\}_{j=1}^n, \{g_k\}_{k=1}^m), \quad \forall x \in \mathcal{X}_f \cap \mathcal{X}_g.
\end{align*}
\end{problem}
\begin{problem}
Given a finite dimensional Banach space $\mathcal{X}$, let $\mathcal{P}(\mathcal{X}^*)$ be the set of all finite Parseval p-frames  for  $\mathcal{X}^*$.	What is the best function $\Psi:((0,1)\cup (1, \infty))\times \mathcal{P}(\mathcal{X}^*)\times \mathcal{P}(\mathcal{X}^*) \to (0, \infty)$ satisfying the following: If $\{\tau_j\}_{j=1}^n$ and $\{\omega_k\}_{k=1}^m$ are   Parseval p-frames  for  $\mathcal{X}^*$, then 
\begin{align*}
	R_{\tau, \alpha} (f)+R_{\omega, \alpha} (f)\geq \Psi (\alpha, \{\tau_j\}_{j=1}^n, \{\omega_k\}_{k=1}^m), \quad \forall f \in \mathcal{X}^*_\tau \cap \mathcal{X}^*_\omega.
\end{align*}	
\end{problem}
Based on  breakthrough result of Berta, Christandl, Colbeck, Renes, and Renner \cite{BERTACHRISTANDLCOLBECKRENESRENNER, BERTACHRISTANDLCOLBECKRENESRENNER2} (which is later generalized by Coles and Piani \cite{COLESPIANI}), we also  set the following problem. 
\begin{problem}
What is the finite dimensional Banach space analogue of Berta-Christandl-Colbeck-Renes-Renner Uncertainty Principle (in the presence of quantum memory?)
\end{problem}

 \bibliographystyle{plain}
 \bibliography{reference.bib}

\end{document}